\documentclass{article}
\usepackage{amsmath,amssymb,graphicx,stmaryrd}
\usepackage{amsthm,mathrsfs}
\usepackage{ascmac}
\usepackage[all]{xy}
\usepackage{comment}

\bibliographystyle{plain}

  \title{The Calabi invariant and The Lyndon-Hochschild-Serre spectral sequence}
  \author{TAKUYA FUJITANI}
  \date{\today}

\makeatletter

  \@addtoreset{equation}{section}

  \theoremstyle{plain}
  \newtheorem{axm}{公理}[section]
  \newtheorem{thm}[axm]{Theorem}
  \newtheorem{prp}[axm]{Proposition}
  
  \newtheorem{cor}[axm]{Corollary}
  
  \newtheorem*{clm*}{主張}

  \theoremstyle{definition}
  \newtheorem{dfn}[axm]{Definition}

  \theoremstyle{remark}
  \newtheorem*{note*}{記号}
  \newtheorem*{rmk*}{注釈}
  \newtheorem*{exm*}{例}
  \newtheorem{exm}[axm]{Example}
  \newtheorem{rmk}[axm]{Remark}

  \def\Z{\mathbb{Z}}
  
  \def\R{\mathbb{R}}

  \def\id{\mathrm{id}}
  \def\sgn{\mathrm{sgn}}

  \def\Hom{\mathrm{Hom}}

  \newcommand{\Rm}[1]{\mbox{$\mathrm{#1}$}}

\begin{document}

\maketitle

\begin{abstract}
Let $G$ be a group and $N$ be a normal subgroup of $G$. 
There exists the group extension $G$ of $G/N$ by $N$. 
For a $G$-module $A$ which $N$ acts on trivially and a $G$-invariant homomorphism on $N$ to $A$, we obtain a central extension of $G/N$ by $A$.
By using connection cochains, we exhibit the formula of its extension class such that clarify the relation among connection cochains, extension classes and the LHS spectral sequence. 
\end{abstract}

\tableofcontents


\section{Introduction}

Let $D$ be a closed unit disk in $\R^{2}$ with a standard symplectic form $\omega$, and $G = \Rm{Symp}(D, \omega)$ denote the group of symplectomorphisms on $D$. 
There exists a group extension
\[
1 \to G_{\Rm{rel}} \to G \to G_{\partial} \to 1, 
\]
where we set $G_{\Rm{rel}} = \{g \in \Rm{Symp}(D, \omega) \mid g|_{\partial D} = \id\}$ and $G_{\partial} = \Rm{Diff}^{+}(\partial D)$ the group of diffeomorphisms on the circle $\partial D$. 
On such the group $G_{\Rm{rel}}$, we have a homomorphism $\Rm{Cal} : G_{\Rm{rel}} \to \R$ which is called the {\it Calabi invariant}. 
Moriyoshi \cite{Moriyoshi2016} defined a group cochain $\tau : G \to \R$ such that, for $g \in G$ and $h \in G_{\Rm{rel}}$, $\tau(h g) = \Rm{Cal}(h) + \tau(g)$, and proved that the negative coboundary $-\delta\tau$ induces the extension class $e(G/K) \in H^{2}(G_{\partial}; \R)$ of the Calabi extension 
\[
0 \to G_{\Rm{rel}} \to G/K \to G_{\partial} \to 1. 
\]
Here we set $K = \Rm{Ker}\Rm{Cal}$. 
Moriyoshi called this cochain $\tau$ the connection cochain over $\Rm{Cal}$. 

In the general case, let $G$ be a group, $N$ be a normal subgroup of $G$ and $A$ be a $G$-module which $N$ acts on trivially. 
Let $f : N \to A$ be a homomorphism such that $f(g n g^{-1}) = f(n)$ for $g \in G$ and $n \in N$. 
There are a group extension 
\[
1 \to N \to G \to G/N \to 1
\]
and the induced central extension 
\[
0 \to A \to G_{A} \to G/N \to 1, 
\]
where $G_{A}$ is the quotient of $G \times A$ by the equivalence relation $(n g, a) \sim (g, f(n) + a)$ for $(g, a) \in G \times A$ and $n \in N$. 
If we have a connection cochain $\tau : G \to A$ over $f : N \to A$, that is, $\tau(n g) = f(n) + \tau(g)$ for $g \in G$ and $n \in N$, then the negative coboundary $-\delta\tau$ induces the extension class $e(G_{A}) \in H^{2}(G/N; A)$ of the above central extension. 
To be more precise, it is stated as follows. 

\begin{thm}[see Theorem \ref{thm:1}]
Let $e(G_{A})$ denote the extension class of $G$ with coefficients in $A$. 
Then there exists a connection cochain $\tau : G \to A$ over $f$ and we can regard a 2-cochain $-\delta\tau$ on $G$ as a 2-cocycle on $G/N$ whose cohomology class coincides $e(G_{A})$. 
\end{thm}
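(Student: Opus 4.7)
The plan is to manufacture $\tau$ from a set-theoretic section of $\pi \colon G \to G/N$, show that $\delta\tau$ descends to a $2$-cochain on $G/N$, and then match the descended $-\delta\tau$ with the canonical $2$-cocycle of the central extension $G_A$.

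\textbf{Step 1 (construction of $\tau$).} Choose a set-theoretic section $s \colon G/N \to G$ of $\pi$ with $s(\bar e) = e$. Each $g \in G$ then admits a unique presentation $g = n_g\, s(\pi(g))$ with $n_g \in N$; define $\tau(g) := f(n_g)$. For $n \in N$ the canonical decomposition of $n g$ is $(n n_g)\, s(\pi(g))$, so since $f$ is a homomorphism, $\tau(n g) = f(n n_g) = f(n) + \tau(g)$, which is the required connection property.

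\textbf{Step 2 (descent of $\delta\tau$).} Using $\delta\tau(g_1, g_2) = g_1 \cdot \tau(g_2) - \tau(g_1 g_2) + \tau(g_1)$, I verify that the value depends only on $\pi(g_1), \pi(g_2)$. For the left slot, replacing $g_1$ by $n g_1$ changes $\tau(g_1)$ and $\tau(g_1 g_2)$ each by $+f(n)$, which cancel, while triviality of the $N$-action on $A$ gives $(n g_1) \cdot \tau(g_2) = g_1 \cdot \tau(g_2)$. For the right slot, replacing $g_2$ by $n g_2$, the rewriting $g_1 n = (g_1 n g_1^{-1}) g_1$ with $g_1 n g_1^{-1} \in N$ together with $G$-invariance $f(g_1 n g_1^{-1}) = f(n)$ and triviality of the $G$-action on $f(N) \subset A$ (automatic because $G_A \to G/N$ is central) produces the needed cancellation. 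Hence $-\delta\tau$ descends to a $2$-cochain on $G/N$, and it is a cocycle since $\delta^2 = 0$ already on $G$.

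\textbf{Step 3 (matching with $e(G_A)$).} Take the set-theoretic section $\sigma \colon G/N \to G_A$ given by $\sigma(h) := [s(h), 0]$. Writing $s(h_1) s(h_2) = n(h_1, h_2)\, s(h_1 h_2)$ for a unique $n(h_1, h_2) \in N$, the equivalence relation defining $G_A$ yields $\sigma(h_1)\sigma(h_2) = [n(h_1, h_2)\, s(h_1 h_2), 0] \sim [s(h_1 h_2), f(n(h_1, h_2))]$, so the extension cocycle representing $e(G_A)$ is $c(h_1, h_2) = f(n(h_1, h_2))$. Since $\tau(s(h)) = f(e) = 0$ and $\tau(s(h_1) s(h_2)) = f(n(h_1, h_2))$, I compute $-\delta\tau(s(h_1), s(h_2)) = f(n(h_1, h_2)) = c(h_1, h_2)$, so the descended cocycle equals $c$ on the nose.

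\textbf{Main obstacle.} The delicate point is the right-slot descent in Step 2, which couples the normality of $N$, the conjugation invariance of $f$, and the triviality of the $G$-action on $f(N)$; isolating these three ingredients cleanly so that the cross terms cancel is essentially the substantive content of the theorem, while Steps 1 and 3 are then organizational bookkeeping.
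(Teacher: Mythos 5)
Your proposal is correct and follows essentially the same route as the paper: you build the same connection cochain $\tau(g)=f\bigl(g\,s(\pi(g))^{-1}\bigr)$ from a section, show $\delta\tau$ descends to $G/N$ using triviality of the $N$-action and $G$-invariance of $f$, and identify $-\delta\tau$ with the cocycle $f\bigl(s(h_1)s(h_2)s(h_1h_2)^{-1}\bigr)$ representing $e(G_A)$. The only (harmless) difference is organizational: you evaluate the descended cocycle at section representatives where $\tau$ vanishes, whereas the paper computes $\delta\tau_s(g,h)$ for general $g,h$ and passes through the push-forward of $e(G/N')$.
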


In the term of the spectral sequence, we can regard this theorem as follow: 
For the group extension $1 \to N \to G \to G/N \to 1$, there is the Lyndon-Hochschild-Serre (LHS) spectral sequence $E_{2}^{p, q} \Rightarrow H^{p+q}(G)$. 
Then $E_{2}^{1, 0}$ is isomorphic to $\Hom(N, A)^{G}$ that is the group of homomorphisms on $N$ to $A$ such that $f(g n g^{-1}) = f(n)$ for $g \in G$ and $n \in N$. 
For $f \in \Hom(N, A)^{N}$, a connection cochain $\tau$ over $f$ is a representation cocycle of an element in $E_{2}^{1, 0}$ corresponding to $f \in \Hom(N, A)^{G}$. 
The transgression image $d_{2}f \in H^{2}(G/N; A)$ of $f \in \Hom(N, A)^{G}$ represents $\delta\tau$. 
On the other hand, the transgression image $d_{2}f$ coincides the coupling of the extension class $e(G/N')$ of $0 \to N/N' \to G/N' \to G/N \to 1$ that is the Abelization of $1 \to N \to G \to G/N \to 1$ and the negative $f$. 
This coupling is just the extension class $e(G_{A}) \in H^{2}(G/N; A)$, that is, $d_{2}f = -e(G_{A})$.  
It follows that $e(G_{A}) = -d_{2}f = [-\delta\tau]$. 

This paper is organized as follows. 
In section 2, we recall the theory of the cohomology for groups. 
It is known that the low-dimensional cohomology group for groups has the another definition. 
There is a bijection between the second cohomology group for groups and group extensions, in particularly, central extensions. 
In section 3, we briefly review a relation between the Calabi extension and the universal Euler class of the group of orientation-preserving diffeomorphisms on the circle due to Moriyoshi. 
In section 4, we introduce connection cochains over some homomorphisms and prove that its negative coboundary represents the extension class for central extensions. 
In the final section, we recall the LHS spectral sequence. 
We understand our theorem in the terms of the spectral sequence and the five-term exact sequence. 

\section*{Acknowledgement}
I would like to express my deepest gratitude to Prof. Moriyoshi who provided carefully considered feedback and valuable comments.

\section{The group cohomology and the group extensions}

Let $G$ be a group and $A$ be an Abelian group. 
Assume that $A$ has a left and $G$-action that is written by $g.a \in A$ for $a \in A$ and $g \in G$. 

\begin{dfn}
The {\it group cochain complex} $(C^{\ast}(G; A), \delta)$ with coefficients in $A$ is given by the pair 
\[
C^{p}(G; A) = \{c : G^{p} \to A\}, \quad \delta : C^{p}(G; A) \to C^{p+1}(G; A), 
\]
where a {\it group} $p$-{\it cochain} $c : G^{p} \to A$ is an arbitrary function on the $p$-tuple product of $G$ and where the coboundary $\delta$ is defined to be 
\begin{align*}
\delta c(g_{1}, \ldots, g_{p+1}) = g_{1}.c(g_{2}, \ldots, g_{p+1}) & + \sum_{i=1}^{p}(-1)^{i}c(g_{1}, \ldots, g_{i} g_{i+1}, \ldots, g_{p+1})\\
& + (-1)^{p+1} c(g_{1}, \ldots, g_{p}). 
\end{align*}
Denote by $H^{\ast}(G; A)$ the cohomology group of $C^{\ast}(G; A)$, called the {\it group cohomology} of $G$ with coefficients in $A$. . 
\end{dfn}

Denote by $C^{p}(G; A)_{N}$ the group of the {\it normalized} group $p$-cochains $c : G^{p} \to A$ which are group $p$-cochains such that $c(g_{1}, \ldots, g_{p}) = 0$ whenever one of the $g_{i}$ is equal to $1$. 
As can be seen easily, if $c$ is a normalized cochain then $\delta c$ is so. 
Then $(C^{p}(G; A)_{N}, \delta)$ is a subcomplex of $C^{\ast}(G; A)$.  
As is well known, the cohomology of the normalized group cochain complex $C^{\ast}(G; A)_{N}$ coincides the one of the ordinary group cochain complex $C^{\ast}(G; A)$. 

\begin{exm}
Let $G$ act on $A$ trivially. 
Then every group 1-cocyle $f  \in C^{1}(G; A)$ satisfies that 
\[
\delta f(g, h) = f(h) - f(g h) + f(g) = 0
\]
for $g, h \in G$, so that, $f$ is a homomorphism. 
On the other hand, for any 0-cochain $c \in C^{0}(G; A)$, the coboundary is $\delta c(g) = g \cdot c - c = 0$ for $g \in G$. 
This implies that all of the group 1-coboundaries are trivial. 
Then the first cohomology group $H^{1}(G; A)$ is just the group of homomorphisms $\Hom(G, A)$. 
\end{exm}

As with the first cohomology, there is an alternative definition for the second cohomology groups. 
We recall group extensions and central extensions. 

\begin{dfn}
Let $A$, $\Gamma$ and $G$ be arbitrary groups. 
A {\it group extension} $\Gamma$ of $G$ by $A$ is a short exact sequence of groups: 
\[
1 \to A \to \Gamma \to G \to 1. 
\]
Furthermore, $\Gamma$ is a {\it central extension} if $A \to \Gamma$ factors through the center of $\Gamma$. 
\end{dfn}

Consider the group extension $\Gamma$ of $G$ by an Abelian group $A$: 
\[
0 \to A \hookrightarrow\ \Gamma \to G \to 1
\]
Set $s : G \to \Gamma$ an arbitrary set-theorical section. 
There is a group action of $G$ on $A$ defined by 
\[
g \cdot a = s(g) a s(g)^{-1}
\]
for $g \in G$ and $a \in A$ since $p(g \cdot a) = g p(a) g^{-1} = 1$, that is, $g \cdot a \in A$. 
In fact, this group action is independent of the choice of section $s$. 

For $g, h \in G$, we have $s(g) s(h) s(g h)^{-1} \in A$ because of $p(s(g) s(h) s(g h)^{-1}) = 1$. 
Then we can define a group 2-cochain $\chi \in C^{2}(G; A)$ as the following: 
\[
\chi(g, h) = s(g) s(h) s(g h)^{-1} \in A. 
\]
It is easy to check that $\chi \in C^{2}(G; A)$ is a group 2-cocycle and the cohomology class $[\chi] \in H^{2}(G; A)$ is independent on the choice of section $s : G \to \Gamma$, so that, the class depends only on the group extension. 

\begin{dfn}
The {\it extension class} of the group extension $\Gamma$, denote by $e(\Gamma)$, is the cohomology class $[\chi] \in H^{2}(G; A)$. 
\end{dfn}

If a section $s : G \to \Gamma$ is a homomorphism then the corresponding 2-cocycle $\chi \in C^{2}(G; A)$ is obviously trivial. 
This implies that the splitting extension has the trivial extension class. 
The following is well known. 

\begin{prp}[see Brown \cite{Brown1982}]
Let $G$ be a group and $A$ be a $G$-module. 
The second cohomology group $H^{2}(G; A)$ is to the equivalence classes of central extensions of $G$ by $A$; 
\[
H^{2}(G; A) \cong \{\mbox{central extensions of $G$ by $A$}\}/\{\mbox{splitting extensions}\}. \qed
\]
\end{prp}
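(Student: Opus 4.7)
The plan is to exhibit a bijection by constructing an explicit inverse to the map $\Gamma \mapsto e(\Gamma)$ defined above, and then checking that splitting extensions correspond exactly to the trivial class.

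First I would pass to normalized cochains and, given a normalized 2-cocycle $\chi \in C^{2}(G;A)$, build a group $\Gamma_{\chi}$ on the set $A \times G$ with product
\[
(a, g) \cdot (a', g') = (a + g \cdot a' + \chi(g, g'),\ g g').
\]
The cocycle identity $g_{1}\cdot\chi(g_{2},g_{3}) - \chi(g_{1}g_{2}, g_{3}) + \chi(g_{1}, g_{2}g_{3}) - \chi(g_{1},g_{2}) = 0$ is precisely what is needed for associativity, while normalization of $\chi$ makes $(0,1)$ a two-sided identity and forces explicit inverses. The sequence $0 \to A \to \Gamma_{\chi} \to G \to 1$ with $a \mapsto (a,1)$ and $(a,g)\mapsto g$ is then an extension, which is central exactly when the $G$-action on $A$ is trivial; the section $g \mapsto (0,g)$ returns the cocycle $\chi$ by construction.

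Next I would verify that the correspondence descends to equivalence classes on both sides. If $s' : G \to \Gamma$ is another set-theoretical section and $\eta(g) = s'(g) s(g)^{-1} \in A$, then a short computation expanding $s'(g) s'(h) s'(gh)^{-1}$ shows the new cocycle differs from $\chi$ by the coboundary $\delta \eta$, so $[\chi]$ depends only on $\Gamma$; an isomorphism of extensions $\Gamma \cong \Gamma'$ transports sections and hence cocycles, so equivalent extensions yield cohomologous cocycles. Conversely, given an extension $\Gamma$ with section $s$ and associated cocycle $\chi$, the map $\Gamma_{\chi} \to \Gamma$, $(a,g) \mapsto a \cdot s(g)$, is a morphism of extensions, hence an isomorphism by the five lemma, so every class in $H^{2}(G;A)$ comes from some extension.

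Finally, I would identify the trivial class with the splitting extensions. A section that is itself a homomorphism produces the zero cocycle. Conversely, if $[\chi] = 0$, write $\chi = \delta \eta$ and replace $s$ by $s'(g) = \eta(g)^{-1} s(g)$; the identity $\chi = \delta \eta$ translates exactly into the statement that $s'$ is a homomorphism, so $\Gamma$ splits. The main obstacle is the careful verification of associativity of $\Gamma_{\chi}$: everything else is bookkeeping once the construction is in hand, but the cocycle identity must be invoked in exactly the right form, together with the $G$-action on $A$, to make the multiplication law associative. Once that is confirmed, the three steps above combine to give the claimed bijection $H^{2}(G;A) \cong \{\text{central extensions}\}/\{\text{splitting extensions}\}$.
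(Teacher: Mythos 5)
Your proposal is correct and is precisely the classical argument that the paper itself omits (it cites Brown and states the result with no proof): build $\Gamma_{\chi}$ on $A\times G$ from a normalized cocycle, check that associativity is the cocycle identity, show cohomologous cocycles give equivalent extensions via the short five lemma, and match the zero class with split extensions. Your parenthetical observation that $\Gamma_{\chi}$ is \emph{central} exactly when the $G$-action on $A$ is trivial is a point the paper glosses over --- as stated, with $A$ an arbitrary $G$-module, $H^{2}(G;A)$ classifies extensions with abelian kernel inducing the given action, and one only gets central extensions in the trivial-action case relevant to the rest of the paper.
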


\begin{rmk}
Let $G$ be a group and $N$ be a non-Abelian normal subgroup of $G$. 
Then we have a group extension $1 \to N \to G \xrightarrow{p} G/N \to 1$. 
Since $N$ is not Abelian, we cannot define the extension class of such an extension. 
Set $N'$ the commutator subgroup of $N$. 
For $n, m \in N$ and $g \in G$, we have $g [n, m] g^{-1} = [g n g^{-1}, g m g^{-1}]$, where $[ \ , \ ]$ is the commutator on $N$. 
This implies that $N'$ is normal in $G$ and we can define a new group extension: 
\[
0 \to N/N' \to G/N' \to G/N \to 1. 
\]
Since $N/N'$ is Abelian, we obtain the extension class $e(G/N') \in H^{2}(G/N; N/N')$. 

Moreover, let $s : G/N \to G$ be a set-theorical section of the projection $p$. 
We have a map $\chi : G/N \times G/N \to N$ defined by $\chi(g, h) = s(x) s(y) s(x y)^{-1} \in N$ for $x, y \in G/N$ and we can regard $\chi$ as a representation cocycle of the extension class of $e(G/N')$ by composing the natural projection $N \to N/N'$. 
\end{rmk}

\begin{rmk}
Let $1 \to N \xrightarrow{i} \Gamma \to G \to 1$ be a group extension of $G$ by $N$ and $f : N \to A$ be a $\Gamma$-homomorphism, that is, $f(\gamma \cdot n) = \gamma \cdot f(n)$ for $\gamma \in \Gamma$ and $n \in N$, where $A$ is an Abelian group. 
We can extend the group extension by $N$ to the central extension by $A$: 
\[
0 \to A \to \Gamma_{A} \to G \to 1, 
\]
where $\Gamma_{A} = \Gamma \times A/\sim$ with the equivalence relation $(i(n) \gamma, a) \sim (\gamma, f(n) + a)$ for $(\gamma, a) \in \Gamma \times A$ and $n \in N$. 
It is easy to check that the extension class $e(\Gamma_{A})$ is the push-forward of the extension class of $e(\Gamma/[N, N])$ by $f$, and the class is called the extension class with coefficients in $A$ along $f$. 
\end{rmk}

\begin{exm}[the universal covering of the circle diffeomorphisms.]
\label{exm:univ_cov}
Set $\Rm{Diff}^{+}(S^{1})$ the group of orientation-preserving diffeomorphisms on the circle $S^{1}$. 
Let $H$ be the universal covering group of $\Rm{Diff}^{+}(S^{1})$. 
Then we obtain a central extension $0 \to \Z \to H \xrightarrow{\pi} \Rm{Diff}^{+}(S^{1}) \to 1$. 
By the inclusion $\Z \hookrightarrow \R$, we extend this central extension to the central extension by $\R$: 
\[
0 \to \R \to H_{\R} \to G_{\partial} \to 1. 
\]
As is well known, the extension class $e(H_{\R})$ is given by the formula 
\[
\chi(g_{1}, g_{2}) = \frac{1}{4\pi^{2}} \int_{0}^{2\pi}\left(h_{1} \circ h_{2}(x) - h_{1}(x) - h_{2}(x)\right) \, dx
\]
with $h_{1}, h_{2} \in H$ and $g_{1} = \pi(h_{1})$, $g_{2} = \pi(g_{2})$. 
\end{exm}


\section{The Calabi invariants and the Euler class}

In this section, we review the work on the relation between the Calabi invariant and extension classes of group extensions due to Moriyoshi \cite{Moriyoshi2016}. 
Let $D = \{(x,y) \in \R^{2} \mid x^2+y^2 \leq 1\}$ be a closed unit disk in $\R^{2}$ with a standard symplectic form $\omega = dx \wedge dy$, and let $G = \Rm{Symp}(D)$ denote the group of symplectomorphisms on $D$. 
There exists the following short exact sequence of groups:
\[
1 \to G_{\Rm{rel}} \to G \to G_{\partial} \to 1.
\]
Here we set $G_{\Rm{rel}} = \{g \in G \mid g|_{\partial D} = \id\}$ and $G_{\partial} = \Rm{Diff}^{+}(\partial D)$. 

\begin{dfn}[Tsuboi \cite{Tsuboi00}]
The {\it Calabi invariant} $\Rm{Cal} : G_{\Rm{rel}} \to \R$ is defined 
\[
\Rm{Cal}(h) = - \int_{D} \eta \wedge h^{\ast}\eta
\]
for $h \in G_{\Rm{rel}}$, where $\eta$ is a 1-form on $D$ such that $d\eta = \omega$. 
\end{dfn}

The Calabi invariant $\Rm{Cal}$ is independent on the choice of $\eta$ such as $\omega = d\eta$; see McDuff-Salamon \cite{McDuff-Salamon98}. 

\begin{prp}
The Calabi invariant yeilds a $G$-invariant homomophism on $G_{\Rm{rel}}$.  
\end{prp}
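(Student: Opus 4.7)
The plan is to establish the two requirements of the proposition separately: the additivity $\Rm{Cal}(h_{1}h_{2}) = \Rm{Cal}(h_{1}) + \Rm{Cal}(h_{2})$ for $h_{1}, h_{2} \in G_{\Rm{rel}}$, and the conjugation-invariance $\Rm{Cal}(ghg^{-1}) = \Rm{Cal}(h)$ for $g \in G$ and $h \in G_{\Rm{rel}}$. For both parts, the starting observation is that, since $h \in G_{\Rm{rel}}$ is a symplectomorphism, the 1-form $h^{*}\eta - \eta$ is closed (because $d(h^{*}\eta-\eta) = h^{*}\omega-\omega = 0$), and since $D$ is simply connected there exists a smooth $f_{h}: D \to \R$ with $h^{*}\eta-\eta = df_{h}$. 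The hypothesis $h|_{\partial D} = \id$ gives $h \circ i = i$ for the inclusion $i: \partial D \hookrightarrow D$, hence $i^{*}(h^{*}\eta-\eta) = 0$, so $f_{h}$ is locally constant on $\partial D = S^{1}$; after normalization we may assume $f_{h}|_{\partial D} = 0$.

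For additivity, I would first convert the defining integral into the cleaner expression $\Rm{Cal}(h) = -\int_{D} f_{h}\,\omega$: expand $\eta\wedge h^{*}\eta = \eta\wedge\eta + \eta\wedge df_{h} = \eta\wedge df_{h}$, rewrite via $d(f_{h}\eta) = df_{h}\wedge\eta + f_{h}\omega$, and apply Stokes' theorem, the boundary term $\int_{\partial D} f_{h}\eta$ being killed by the normalization. Then the computation $(h_{1} h_{2})^{*}\eta = h_{2}^{*}(\eta + df_{h_{1}}) = \eta + df_{h_{2}} + d(f_{h_{1}}\circ h_{2})$ identifies $f_{h_{1} h_{2}}$ with $f_{h_{2}} + f_{h_{1}}\circ h_{2}$ (the right side also vanishes on $\partial D$, so this is the normalized primitive). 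Finally $\int_{D}(f_{h_{1}}\circ h_{2})\,\omega = \int_{D} f_{h_{1}}\,\omega$ because $h_{2}^{*}\omega = \omega$ and $h_{2}$ is orientation-preserving, giving $\Rm{Cal}(h_{1} h_{2}) = \Rm{Cal}(h_{1}) + \Rm{Cal}(h_{2})$.

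For conjugation-invariance, the cleanest route is a change of variables combined with the already-cited independence of $\Rm{Cal}$ from the choice of primitive $\eta$ of $\omega$. Setting $\eta' := g^{*}\eta$, one has $d\eta' = g^{*}\omega = \omega$, and pulling back under $g$ yields
\[
-\int_{D} \eta\wedge (ghg^{-1})^{*}\eta \;=\; -\int_{D} g^{*}\bigl(\eta\wedge(ghg^{-1})^{*}\eta\bigr) \;=\; -\int_{D} \eta'\wedge h^{*}\eta',
\]
using $g^{*}(ghg^{-1})^{*}\eta = (ghg^{-1}\circ g)^{*}\eta = (gh)^{*}\eta = h^{*}g^{*}\eta = h^{*}\eta'$, and the fact that $g$ is orientation-preserving (as $g^{*}\omega=\omega$ with $\omega$ a volume form). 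By the primitive-independence recalled from McDuff--Salamon \cite{McDuff-Salamon98}, the last expression equals $\Rm{Cal}(h)$.

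The step to watch carefully is the boundary normalization of $f_{h}$: one has to verify that $h|_{\partial D} = \id$ really forces $i^{*}(h^{*}\eta-\eta)=0$ — which is exactly where $h\circ i = i$ and the naturality of pullback enter — so that $f_{h}$ is constant along the connected $\partial D$. Without the normalized form $\Rm{Cal}(h) = -\int_{D} f_{h}\,\omega$, the Stokes computation in the additivity step leaves an uncontrolled boundary term and the argument collapses; everything else is bookkeeping with pullbacks and change of variables.
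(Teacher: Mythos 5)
Your proof is correct, but it is organized differently from the paper's. For additivity, the paper fixes $g, h \in G_{\Rm{rel}}$ and evaluates the single integral $\int_{D} h^{\ast}(\eta - g^{\ast}\eta) \wedge (\eta - h^{\ast}\eta)$ in two ways: expanding the wedge product gives (up to sign) $\Rm{Cal}(gh) - \Rm{Cal}(g) - \Rm{Cal}(h)$, while substituting $\eta - g^{\ast}\eta = df_{g}$ and applying Stokes shows the integral vanishes since the boundary term dies. You instead normalize the primitive $f_{h}$ to vanish on $\partial D$, rewrite $\Rm{Cal}(h) = -\int_{D} f_{h}\,\omega$, and then exploit the cocycle identity $f_{h_{1}h_{2}} = f_{h_{2}} + f_{h_{1}} \circ h_{2}$ together with $h_{2}^{\ast}\omega = \omega$; this is the standard ``generating function'' presentation, and it makes the role of the boundary condition more transparent at the cost of the extra normalization step. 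For conjugation invariance the divergence is larger: the paper expands $g^{\ast}\eta = \eta - df_{g}$ and grinds through three Stokes terms, whereas you perform the change of variables $\int_{D}\alpha = \int_{D} g^{\ast}\alpha$ to reduce everything to computing $\Rm{Cal}(h)$ with the alternative primitive $\eta' = g^{\ast}\eta$, and then invoke the independence of $\Rm{Cal}$ of the choice of primitive, which the paper states (citing McDuff--Salamon) just before the proposition. That is legitimate and noticeably shorter; the only caveat is that the paper's direct computation is in effect a self-contained proof of exactly that independence in the special case $\eta' = g^{\ast}\eta$, so if one wanted the proposition to stand without the external citation, your argument would need to absorb that lemma.
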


\begin{proof}
Note that $d(\eta - g^{\ast}\eta) = \omega - g^{\ast}\omega = 0$ for $g \in G$, thus there exists a smooth function $f_{g} \in C^{\infty}(D)$ such that $\eta - g^{\ast}\eta = df_{g}$ since the first cohomology group $H^{1}(D)$ vanishes. 
We have $(\eta - h^{\ast}\eta)|_{\partial D} = 0$ because $h|_{\partial D} = \id$ for $h \in G_{\Rm{rel}} \subset G$ by definition. 
Then we obtain
\begin{align*}
\int_{D}h^{\ast}(\eta - {g}^{\ast}\eta) \wedge (\eta - {h}^{\ast}\eta) = & \int_{D}\{h^{\ast}\eta \wedge \eta - (gh)^{\ast}\eta \wedge \eta + h^{\ast}(g^{\ast}\eta \wedge \eta)\}\\
= & \int_{D}h^{\ast}\eta \wedge \eta - \int_{D}(gh)^{\ast}\eta \wedge \eta + \int_{D}g^{\ast}\eta \wedge \eta\\
= & -\Rm{Cal}(h) + \Rm{Cal}(gh) - \Rm{Cal}(h)
\end{align*}
for $g, h \in G_{\Rm{rel}}$.

On the other hand, we obtain 
\begin{align*}
\int_{D} {h}^{\ast}(\eta - {g}^{\ast}\eta) \wedge (\eta - {h}^{\ast}\eta) = & \int_{D} h^{\ast}df_{g} \wedge (\eta - g^{\ast}\eta)\\
= & \int_{D} d(h^{\ast}f_{g}(\eta - g^{\ast}\eta))\\
= & \int_{\partial D} h^{\ast}f_{g}(\eta - g^{\ast}\eta) = 0
\end{align*}
for $g, h \in G_{\Rm{rel}}$. 
These imply that the Calabi invariant $\Rm{Cal} : G_{\Rm{rel}} \to \R$ is a homomorphism. 

Furthermore, let $g \in G$ and $h \in G_{\Rm{rel}}$. 
Then we have
\begin{align*}
g^{\ast}\eta \wedge (gh)^{\ast}\eta = & (\eta - df_{g}) \wedge h^{\ast}(\eta - df_{g})\\
= & \eta \wedge h^{\ast}\eta - df_{g} \wedge h^{\ast}\eta - \eta \wedge h^{\ast}df_{g} + df_{g} \wedge h^{\ast}df_{g}\\
= & \eta \wedge h^{\ast}\eta - d(f_{g} \wedge h^{\ast}\eta) + f_{g} \omega + d(\eta \wedge h^{\ast}f_{g}) - h^{\ast}f_{g}\omega + d(f_{g} \wedge h^{\ast}df_{g}). 
\end{align*}
Note that $h|_{\partial D} = \id$, thus we obtain 
\begin{align*}
\Rm{Cal}(ghg^{-1}) = & -\int_{D}\eta \wedge (ghg^{-1})^{\ast}\eta\\
= & -\int_{D}g^{\ast}\eta \wedge (gh)^{\ast}\eta\\
= & -\int_{D}\eta \wedge h^{\ast}\eta - d(f_{g} \wedge h^{\ast}\eta) + d(\eta \wedge h^{\ast}f_{g}) + d(f_{g} \wedge h^{\ast}df_{g})\\
= & -\int_{D}\eta \wedge h^{\ast}\eta - \int_{\partial D}f_{g} \, \eta + \int_{\partial D} \eta \, f_{g} + \int_{\partial D}f_{g} \wedge df_{g}\\
= & \Rm{Cal}(h). 
\end{align*}
This implies that $\Rm{Cal}$ is $G$-invariant. 
\end{proof}

It is possible to extend the Calabi invariant from $G_{\Rm{rel}}$ to $G$ by the same formula. 
Namely, there is a cochain  $\tau : G \to \R$ defined by
\[
\tau(g) = - \int_{D} \eta \wedge g^{\ast}\eta
\]
for $g \in G$. 
However, this cochain $\tau$ depends on the choice of 1-form $\eta$ such that $\omega = d\eta$. 

\begin{prp}
For $g \in G$ and $h \in G_{\Rm{rel}}$, we have
\[
\tau(hg) = \Rm{Cal}(h) + \tau(g). 
\]
\end{prp}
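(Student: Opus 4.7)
The plan is to reduce the identity $\tau(hg) = \mathrm{Cal}(h) + \tau(g)$ to a clean computation involving a primitive $f_{h}$ of $\eta - h^{\ast}\eta$, using Stokes' theorem together with the fact that $g$ is a symplectomorphism preserving $\partial D$ setwise.

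First I would introduce, exactly as in the proof that $\mathrm{Cal}$ is a homomorphism, a smooth function $f_{h}\in C^{\infty}(D)$ such that $\eta - h^{\ast}\eta = df_{h}$; this exists because $d(\eta - h^{\ast}\eta) = \omega - h^{\ast}\omega = 0$ and $H^{1}(D) = 0$. The condition $h|_{\partial D} = \mathrm{id}$ forces $df_{h}|_{\partial D} = 0$, so $f_{h}$ is constant on the connected circle $\partial D$; call this constant $c$.

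Next I would compute $\tau(hg) - \tau(g)$ directly. Since $(hg)^{\ast}\eta = g^{\ast}h^{\ast}\eta = g^{\ast}\eta - d(g^{\ast}f_{h})$, expanding gives
\[
\tau(hg) - \tau(g) = -\int_{D}\eta \wedge \bigl((hg)^{\ast}\eta - g^{\ast}\eta\bigr) = \int_{D}\eta \wedge d(g^{\ast}f_{h}).
\]
The identity $d(\phi\eta) = d\phi \wedge \eta + \phi\omega$ applied to $\phi = g^{\ast}f_{h}$, followed by Stokes, turns this into
\[
\tau(hg) - \tau(g) = -\int_{\partial D}(g^{\ast}f_{h})\,\eta + \int_{D}(g^{\ast}f_{h})\,\omega.
\]
Since $g \in G$ preserves $\partial D$ setwise, $g^{\ast}f_{h}|_{\partial D}$ is again the constant $c$, so the boundary term equals $-c\int_{\partial D}\eta = -c\int_{D}\omega$. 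For the remaining term I would use $g^{\ast}\omega = \omega$ and the change-of-variables identity $\int_{D}g^{\ast}(f_{h}\omega) = \int_{D}f_{h}\omega$ to eliminate $g$, concluding
\[
\tau(hg) - \tau(g) = -\int_{\partial D} f_{h}\,\eta + \int_{D} f_{h}\,\omega = \int_{D}\eta \wedge df_{h}.
\]

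Finally, the same Stokes argument applied to $\mathrm{Cal}(h) = -\int_{D}\eta \wedge h^{\ast}\eta$, using $\eta \wedge \eta = 0$ and $\eta - h^{\ast}\eta = df_{h}$, shows $\mathrm{Cal}(h) = \int_{D}\eta \wedge df_{h}$, which matches the expression just obtained. The main obstacle is verifying cleanly that the boundary integral is insensitive to the pullback by $g$; this hinges on the two facts that $g(\partial D) = \partial D$ and that $f_{h}$ is a single constant on $\partial D$, so once these are in hand the rest is bookkeeping with Stokes.
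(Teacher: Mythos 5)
Your proof is correct: introducing the primitive $f_{h}$ of $\eta - h^{\ast}\eta$, noting it is constant on $\partial D$, and using Stokes together with $g(\partial D)=\partial D$ and $g^{\ast}\omega=\omega$ gives exactly $\tau(hg)-\tau(g)=\int_{D}\eta\wedge df_{h}=\mathrm{Cal}(h)$. The paper itself only cites Moriyoshi for this proposition, but your argument is the natural one and uses precisely the same devices (primitives $f$ with $df=\eta-g^{\ast}\eta$, vanishing of the pullback on the boundary, Stokes) as the paper's proof that $\mathrm{Cal}$ is a $G$-invariant homomorphism.
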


\begin{proof}
See Moriyoshi \cite{Moriyoshi2016}. 
\end{proof}

We call this cochain $\tau : G \to \R$ as a {\it connection cochain} over $\Rm{Cal}$.

\begin{prp}
There is a group 2-cocycle $\sigma : G_{\partial} \times G_{\partial} \to \R$ such that 
\[
\sigma(g|_{\partial D}, g|_{\partial D}) = - \delta\tau(g, h)
\]
for $g, h \in G$. 
Furthermore, the cohomology class of $\sigma$ is the extension class of the central extension:
\[
0 \to \R \to G/K \to G_{\partial} \to 1, 
\]
where $K = \Rm{Ker}\Rm{Cal}$ and we identify $\R$ as $G_{\Rm{rel}}/K$. 
\end{prp}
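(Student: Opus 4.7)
The plan is to first show that $-\delta\tau$ descends through the projection $G \to G_{\partial}$, giving the desired cochain $\sigma$, and then to identify $[\sigma]$ with the extension class $e(G/K)$ by choosing the connection cochain compatibly with a set-theoretical section of $G/K \to G_{\partial}$.

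For the first step, I would exploit that $G_{\Rm{rel}}$ is normal in $G$ (a conjugate of a boundary-fixing map remains boundary-fixing) together with the $G$-invariance of $\Rm{Cal}$. These combine to give a right-multiplication rule $\tau(g k) = \tau((g k g^{-1}) g) = \Rm{Cal}(g k g^{-1}) + \tau(g) = \Rm{Cal}(k) + \tau(g)$ for $g \in G$ and $k \in G_{\Rm{rel}}$, complementing the given left-multiplication rule. Since $\R$ carries the trivial $G$-action (again by $G$-invariance of $\Rm{Cal}$), we have $-\delta\tau(g, h) = -\tau(h) + \tau(gh) - \tau(g)$, and a direct computation shows $-\delta\tau(g k_{1}, h k_{2}) = -\delta\tau(g, h)$ for all $k_{1}, k_{2} \in G_{\Rm{rel}}$: the Calabi terms cancel in pairs. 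Hence $-\delta\tau$ factors through $G_{\partial} \times G_{\partial}$ and defines $\sigma$, which is a 2-cocycle because $\delta \circ \delta = 0$ and the projection $G \to G_{\partial}$ is surjective.

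For the second step, I would first observe that any two connection cochains over $\Rm{Cal}$ differ by a 1-cochain that factors through $G_{\partial}$, so their negative coboundaries represent the same class in $H^{2}(G_{\partial}; \R)$ and we are free to make a convenient choice. Given any set-theoretical section $s : G_{\partial} \to G$, define $\tau_{s}(g) = \Rm{Cal}(g \cdot s(g|_{\partial D})^{-1})$; this is a connection cochain that vanishes on the image of $s$. For $x, y \in G_{\partial}$, setting $g = s(x)$ and $h = s(y)$ gives $\sigma(x, y) = -\delta\tau_{s}(g, h) = \tau_{s}(gh) = \Rm{Cal}(s(x) s(y) s(xy)^{-1})$. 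Under the identification $G_{\Rm{rel}}/K \cong \R$ induced by $\Rm{Cal}$, this is precisely the representative 2-cocycle of $e(G/K)$ obtained from the induced section $\bar{s} : G_{\partial} \to G/K$, so $[\sigma] = e(G/K)$.

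The main obstacle is the derivation of the right-multiplication rule $\tau(gk) = \Rm{Cal}(k) + \tau(g)$: the hypothesis on $\tau$ provides only a left-multiplication rule, so promoting it requires combining the normality of $G_{\Rm{rel}}$ with the $G$-invariance of $\Rm{Cal}$. Once this identity is in hand, the descent to $G_{\partial}$ and the comparison with the extension cocycle of $G/K$ become essentially formal.
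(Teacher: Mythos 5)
Your proposal is correct, and it is essentially the argument the paper gives for the general statement (Theorem \ref{thm:1}): descent of $\delta\tau$ to $G_{\partial}\times G_{\partial}$ via normality of $G_{\Rm{rel}}$ and $G$-invariance of $\Rm{Cal}$, independence of the class from the choice of $\tau$ because two connection cochains differ by a cochain factoring through $G_{\partial}$, and identification of $[\sigma]$ with the extension class by the special connection cochain $\tau_{s}(g)=\Rm{Cal}(g\,s(g|_{\partial D})^{-1})$ built from a section. The paper's proof of this particular proposition is only a citation to Moriyoshi, and your derivation of the right-multiplication rule $\tau(gk)=\Rm{Cal}(k)+\tau(g)$ is just a repackaging of the paper's computation $\tau(gnh)=f(gng^{-1})+\tau(gh)$, so there is no substantive difference in approach.
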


\begin{proof}
See Moriyoshi \cite{Moriyoshi2016}. 
\end{proof}

This central extension $0 \to \R \to G/K \to G_{\partial} \to 1$ is called the Calabi extension. 

Recall that the universal covering group $H$ of $G_{\partial}$ and central extensions described in the last section, example \ref{exm:univ_cov}. 

\begin{thm}[Moriyoshi \cite{Moriyoshi2016}] 
In the above terms, the extension class $e(G/K)$ coincides with $e(H_{\R}) \in H^{2}(G_{\partial})$. 
\end{thm}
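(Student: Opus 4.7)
My plan is to compute $-\delta\tau$ explicitly using Stokes' theorem to reduce it to a boundary integral, and then match that boundary integral (up to coboundary) with the Euler cocycle $\chi$ of Example \ref{exm:univ_cov}.

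First I would fix a convenient primitive, say $\eta = \tfrac{1}{2}r^{2}d\theta$ in polar coordinates on $D$, so that $\eta|_{\partial D} = \tfrac{1}{2}d\theta$ and $d\eta = \omega$. For each $g \in G$ the form $g^{\ast}\eta - \eta$ is closed and, since $H^{1}(D) = 0$, exact; write $g^{\ast}\eta - \eta = -df_{g}$ for some $f_{g} \in C^{\infty}(D)$. The identity $\eta \wedge df_{g} = f_{g}\omega - d(f_{g}\eta)$ together with Stokes' theorem yields the decomposition
\[
\tau(g) = \int_{D} f_{g}\,\omega \;-\; \int_{\partial D} f_{g}\,\eta.
\]

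Next, from $(g_{1}g_{2})^{\ast}\eta = g_{2}^{\ast}\eta - d(f_{g_{1}}\circ g_{2})$ one extracts the relation $f_{g_{1}g_{2}} = f_{g_{2}} + f_{g_{1}}\circ g_{2} + c(g_{1},g_{2})$ for some real constant. Because $g_{2}$ preserves $\omega$ and $\int_{D}\omega = \int_{\partial D}\eta = \pi$, the interior and constant contributions cancel in the coboundary, and a short computation gives
\[
-\delta\tau(g_{1},g_{2}) = \int_{\partial D}\bigl(f_{g_{1}} - f_{g_{1}}\circ g_{2}\bigr)\,\eta,
\]
which depends only on boundary data, reconfirming that $-\delta\tau$ descends to $G_{\partial}\times G_{\partial}$.

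To match this with $\chi$, let $\tilde\varphi_{g}\colon \R \to \R$ be a lift of $g|_{\partial D}\in G_{\partial}$ with $\tilde\varphi_{g}(x+2\pi) = \tilde\varphi_{g}(x)+2\pi$. Restricting $df_{g} = \eta - g^{\ast}\eta$ to $\partial D$ and integrating yields $f_{g}|_{\partial D}(\theta) = \tfrac{1}{2}(\theta - \tilde\varphi_{g}(\theta)) + c_{g}$, where the bracketed expression is $2\pi$-periodic. Substituting into the previous formula, using periodicity to interpret $f_{g_{1}}\circ g_{2}$ through the composed lift $\tilde\varphi_{g_{1}}\circ\tilde\varphi_{g_{2}}$, and simplifying produces
\[
-\delta\tau(g_{1},g_{2}) \;=\; \frac{1}{4}\int_{0}^{2\pi}\bigl[\tilde\varphi_{g_{1}}(\tilde\varphi_{g_{2}}(\theta)) - \tilde\varphi_{g_{1}}(\theta) - \tilde\varphi_{g_{2}}(\theta)\bigr]\,d\theta \;+\; \text{const},
\]
which, up to the additive constant (a coboundary of a constant 1-cochain) and the normalization built into the definitions of $\eta$ and $\chi$, reproduces the cocycle of Example \ref{exm:univ_cov}. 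Hence $[-\delta\tau] = [\chi]$ in $H^{2}(G_{\partial};\R)$, i.e. $e(G/K) = e(H_{\R})$.

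The main technical burden is bookkeeping constants and normalizations: verifying that the constants $c_{g}$ and $c(g_{1},g_{2})$ drop out of cohomology, confirming the integrand is independent of the chosen lift (different lifts differ by $2\pi\Z$), and choosing the primitive $\eta$ (or equivalently, scaling $\tau$) so that the scalar multiple matching $-\delta\tau$ to $\chi$ is exactly $1$. The geometric heart—Stokes' theorem localizes $\delta\tau$ to the boundary, and the boundary integrand is essentially the rotational Euler cocycle—is clean once one commits to this computation.
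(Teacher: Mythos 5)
The paper itself gives no proof of this theorem --- it is quoted from Moriyoshi --- so there is nothing internal to compare against; your computation is the natural one and almost certainly the intended one. I checked your steps: the decomposition $\tau(g)=\int_{D}f_{g}\,\omega-\int_{\partial D}f_{g}\,\eta$, the cocycle relation $f_{g_{1}g_{2}}=f_{g_{2}}+f_{g_{1}}\circ g_{2}+c(g_{1},g_{2})$, the cancellation of the interior and constant terms using $\int_{D}\omega=\int_{\partial D}\eta=\pi$, and the resulting boundary formula $-\delta\tau(g_{1},g_{2})=\int_{\partial D}(f_{g_{1}}-f_{g_{1}}\circ g_{2})\,\eta$ are all correct, as is the reduction to $\frac{1}{4}\int_{0}^{2\pi}\bigl[\tilde\varphi_{g_{1}}(\tilde\varphi_{g_{2}}(\theta))-\tilde\varphi_{g_{1}}(\theta)-\tilde\varphi_{g_{2}}(\theta)\bigr]d\theta$ plus the constant $\frac{1}{4}\int_{0}^{2\pi}\theta\,d\theta=\pi^{2}/2$, which is indeed a coboundary (with trivial coefficients, $\delta$ of a constant $1$-cochain is that same constant).

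The one genuine weak point is how you dispose of the scalar. Your formula gives $-\delta\tau=\pi^{2}\chi+\delta(\mathrm{const})$, not $\chi$, and the fix you propose --- rescaling $\eta$ or $\tau$ --- is not available: $\eta$ is pinned by $d\eta=\omega$, and rescaling $\tau$ would make it a connection cochain over $\pi^{-2}\mathrm{Cal}$ rather than $\mathrm{Cal}$, i.e.\ it would change the identification $\R\cong G_{\mathrm{rel}}/K$ that defines $e(G/K)$. The factor $\pi^{2}$ is intrinsic to the stated normalizations, and the paper's own final corollary records exactly this: $\pi^{2}e(H_{\R})=-d_{2}\mathrm{Cal}=e(G/K)$. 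So what you have actually proved is $e(G/K)=\pi^{2}e(H_{\R})$, which is the correct statement under the conventions of this paper; the theorem's literal wording ``coincides with $e(H_{\R})$'' is either using a differently normalized Calabi invariant or is simply loose. You should state the scalar honestly rather than absorb it into ``normalization,'' and you should also record the (easy) check that the integrand $\tilde\varphi_{g_{1}}\circ\tilde\varphi_{g_{2}}-\tilde\varphi_{g_{1}}-\tilde\varphi_{g_{2}}$ is unchanged when either lift is shifted by $2\pi\Z$, which you flag but do not carry out.
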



\section{The connection cochains and the extension classes}

Let $G$ be a discrete group and $N$ be a normal subgroup of $G$. 
There exists a group extension: 
\[
1 \to N \to G \xrightarrow{p} G/N \to 1. 
\]
Set $N'$ the commutator subgroup of $N$.  
We have a new group extension by the Abelian group: 
\[
0 \to N/N' \to G/N' \to G/N \to 1. 
\]

Let $A$ be a $G$-module which $N$ acts on trivially, and let $f : N \to A$ be a $G$-invariant homomorphism. 
Then we extend the group extension by $N$ to the central extension
\[
0 \to A \to G_{A} \to G/N \to 1, 
\]
where $G_{A}$ is the quotient of $G \times A$ with the equivalence relation $(n g, a) \sim (g, f(n) + a)$. 

\begin{dfn}
A cochain $\tau : G \to A$ that satisfies the condition 
\[
\tau(ng) = f(n) + \tau(g)
\]
for $n \in N$ and $g \in G$ is called a {\it connection cochain} over $f$. 
\end{dfn}

\begin{thm}
\label{thm:1}
In the context of the above, there exists a connection cochain $\tau : G \to A$ over $f$. 

Furthermore, there exists a 2-cocycle $\sigma$ on $G/N$ in $A$ which holds the following: 
\begin{enumerate}
\item 
We have $\sigma(p(g), p(h)) = -\delta\tau(g, h)$ for $g, h \in G$.

\item 
The cohomology class $[\sigma] \in H^{2}(G/N; A)$ is independent of the choice of connection cochain $\tau$ over $f$. 

\item 
The extension class $e(G_{A})$ of the central extension $0 \to A \to G_{A} \to G/N \to 1$ coincides with $[\sigma] \in H^{2}(G/N; A)$. 

\item 
Moreover, $e(G_{A})$ equals to the push-forward of $e(G/N') \in H^{2}(G/N; N/N')$ by $f : N \to A$. 
Here, $e(G/N')$ is the extension class of the group extension $0 \to N/N' \to G/N' \to G/N \to 1$. 

\end{enumerate}
\end{thm}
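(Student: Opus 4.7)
My approach is to build $\tau$ directly from a set-theoretical section $s : G/N \to G$ of $p$, read $\sigma$ off as the induced $2$-cocycle, and then match $[\sigma]$ to $e(G_A)$ by identifying both as the pushforward of $e(G/N')$ along $f$. Every $g \in G$ factors uniquely as $g = n_g\, s(p(g))$ with $n_g \in N$, so I would define $\tau(g) := f(n_g)$. Because $mg = (m n_g)\, s(p(g))$ for $m \in N$ and $f$ is a homomorphism, $\tau(mg) = f(m) + \tau(g)$, settling the existence claim.

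Next I would show that $-\delta\tau(g,h) = -g.\tau(h) + \tau(gh) - \tau(g)$ depends only on the pair $(p(g), p(h))$. Replacing $g$ by $mg$ with $m \in N$ is absorbed because $N$ acts trivially on $A$ and the two $f(m)$ contributions coming from $\tau(mgh)$ and $\tau(mg)$ cancel; replacing $h$ by $m'h$ produces $-g.f(m') + f(g m' g^{-1})$, which vanishes by the invariance hypothesis on $f$. So $\sigma(p(g), p(h)) := -\delta\tau(g,h)$ is well-defined, giving (1). Applying the injective pullback $p^\ast : C^\ast(G/N; A) \hookrightarrow C^\ast(G; A)$ of cochain complexes to the identity $p^\ast\sigma = -\delta\tau$ yields $p^\ast(\delta\sigma) = -\delta^2 \tau = 0$, so $\sigma$ is a $2$-cocycle. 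For (2), any two connection cochains $\tau_1, \tau_2$ satisfy $(\tau_1 - \tau_2)(ng) = (\tau_1 - \tau_2)(g)$, whence $\tau_1 - \tau_2 = p^\ast \alpha$ for some $1$-cochain $\alpha$ on $G/N$, and the corresponding $2$-cocycles differ by $\delta \alpha$.

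For (3) and (4) I would work with the section $\bar s(x) := [s(x), 0]$ of $G_A \to G/N$ and the $N$-valued cocycle $n_{x,y} := s(x) s(y) s(xy)^{-1}$. By construction, $(x,y) \mapsto n_{x,y} \bmod N'$ is the standard representative of $e(G/N')$, so $(x,y) \mapsto f(n_{x,y})$ represents the pushforward $f_\ast e(G/N')$. On the other hand, a direct calculation with the relation $(ng, a) \sim (g, f(n) + a)$ in $G_A$ gives $\bar s(x) \bar s(y) \bar s(xy)^{-1} = [1, f(n_{x,y})]$, so the same cocycle represents $e(G_A)$. Finally, my chosen $\tau$ satisfies $\tau \circ s \equiv 0$, hence $\sigma(x,y) = -\delta\tau(s(x), s(y)) = \tau(s(x) s(y)) = \tau(n_{x,y}\, s(xy)) = f(n_{x,y})$, establishing (3) and (4) in one stroke.

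The main delicate point will be the descent of $-\delta\tau$ to $\sigma$: the trivial $N$-action on $A$, the connection-cochain identity for $\tau$, and the invariance of $f$ must be combined precisely, once for left and once for right replacements by elements of $N$. After that the theorem reduces to a bookkeeping exercise with the explicit section $s$.
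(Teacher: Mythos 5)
Your proposal is correct and follows essentially the same route as the paper: the same connection cochain $\tau(g) = f\bigl(g\,s(p(g))^{-1}\bigr)$ built from a set-theoretic section, the same descent of $\delta\tau$ to $G/N$ using the trivial $N$-action and the $G$-invariance of $f$, and the same identification of $\sigma$ with $f_{\ast}$ of the standard extension cocycle $s(x)s(y)s(xy)^{-1}$, which simultaneously represents $f_{\ast}e(G/N')$ and $e(G_A)$. The only (harmless) difference is organizational: you evaluate $\sigma$ on section values where $\tau$ vanishes instead of computing $\delta\tau(g,h)$ for general $g,h$ as the paper does.
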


\begin{proof}
Let $s : G/N \to G$ be a set-theorical section of the natural projection $p : G \to G/N$. 
Since $p(g (s \circ p(g))^{-1}) = p(g) p(g)^{-1} = 1$ for $g \in G$, there is a cochain $\tau_{s} : G \to A$ defined by 
\[
\tau_{s}(g) = f(g \iota(g)^{-1}), \quad \iota(g) = s \circ p(g). 
\] 
Then we obtain 
\[
\tau_{s}(ng) = f(ng \iota(ng)^{-1}) = f(n) + f(g \iota(g)^{-1}) = f(n) + \tau_{s}(g)
\]
for $g \in G$ and $n \in N$. 
Therefore the cochain $\tau_{s}$ is a connection cochain over $f$. 

We have $g \cdot f(n) = f(\iota(g) n \iota(g)^{-1})$ for $g \in G$ and $n \in N$ since $N$ acts on $A$ trivially. 
Thus we obtain
\begin{align*}
\delta\tau_{s}(g, h) = & g \cdot \tau_{s}(h) - \tau_{s}(g h) + \tau_{s}(g)\\
= & g \cdot f(h \iota(h)^{-1}) - f(g h \iota(g h)^{-1}) + f(g \iota(g)^{-1})\\
= & -\left(-f(\iota(g) h \iota(h)^{-1} \iota(g)^{-1}) - f(g \iota(g)^{-1}) + f(g h \iota(g h)^{-1})\right)\\
= & -f(\iota(g) \iota(h) h^{-1} \iota(g)^{-1} \iota(g) g^{-1} g h \iota(g h)^{-1})\\
= & -f(\iota(g) \iota(h) \iota(g h)^{-1})
\end{align*}
for $g, h \in G$. 
Then $\iota(g) \iota(h) \iota(g h)^{-1} = s(p(g)) s(p(h)) s(p(g) p(h))^{-1}$ induces the extension class of $0 \to N/N' \to G/N' \to G/N \to 1$. 
This implies $[-\delta \tau_{s}] = f_{\ast}e(G/N')$. 

Here, we have a natural commutative diagram: 
\[\xymatrix{
1 \ar[r] & N \ar@{^{(}->}[r] \ar[d]_{f} & G \ar[d]^{\pi} \ar[r]^{p} & G/N\ar@{}[d]|{\rotatebox{90}{=}} \ar[r] & 1\\
0 \ar[r] & A \ar@{^{(}->}[r] & G_{A} = G \times A/\sim \ar[r]^-{p'} & G/N \ar[r] & 1, 
}\]
where $\pi : G \to G_{A}$ is the inclusion. 
Then $s' = \pi \circ s$ is a set-theorical section of $p'$. 
Therefore we can write that $\delta\tau_{s}(g, h) = -s'(p(g)) s'(p(h)) s'(p(g) p(h))^{-1}$ and this implies that $\delta\tau_{s}$ induces the extension class of the central extension $0 \to A \to G_{A} \to G/N \to 1$. 

Finally, we prove that the cohomology class $[\sigma] \in H^{2}(G/N; A)$ is independent on the choice of connection cochain over $f$. 

Let $\tau$ be an arbitrary connection cochain over $f$. 
We have
\[
\delta\tau(g, h) = g \cdot \tau(h) - \tau(g h) + \tau(g)
\]
for $g, h \in G$. 
Note that $N$ acts on $A$ trivially, thus we obtain 
\begin{align*}
\delta\tau(n g, h) = & n g \cdot \tau(h) - \tau(n g h) + \tau(n g)\\
 = & g \cdot \tau(h) - f(n) - \tau(g h) + f(n) + \tau(g) = \delta\tau(g, h)
\end{align*}
and 
\begin{align*}
\delta\tau(g, n h) = & g \cdot \tau(n h) - \tau(g n h) + \tau(g)\\
= & g \cdot f(n) + g \cdot \tau(h) - f(g n g^{-1}) - \tau(g h) + \tau(g) = \delta\tau(g, h)
\end{align*}
for $g, h \in G$ and $n \in N$. 
These imply that $\delta\tau$ depends only on $G/N$. 
Therefore there exists a 2-cocycle $\sigma$ on $G/N$ such that $\sigma(p(g), p(h)) = \delta\tau(g, h)$. 

Let $\tau$ and $\tau'$ be connection cochains over $f$, and let $\sigma$ and $\sigma'$ be the 2-cocycles on $G/N$ corresponding to $\tau$ and $\tau'$, respectively. 
Since we have 
\[
(\tau - \tau')(ng) = f(n) + \tau(g) - f(n) - \tau'(g) = (\tau - \tau')(g)
\]
for $g \in G$ and $n \in N$, there exists a group 1-cochain $T$ on $G/N$ such that $T(p(g)) = -(\tau - \tau')(g)$. 
This implies $\sigma - \sigma' = \delta T$, so that, $\sigma$ and $\sigma'$ define the same cohomology class $[\sigma] = [\sigma'] \in H^{2}(G/N; A)$. 
\end{proof}

\section{The Lyndon-Hochschild-Serre spectral sequence and the five-term exact sequence}
Hereinafter, we consider group cochain complexes as normalized ones.  

Let $G$ be a group, $N$ be a normal subgroup of $G$ and $A$ be a $G$-module. 
Then there is the group extension:
\[
1 \to N \to G \to G/N \to 1. 
\]

We define a filtration $(C_{p}^{\ast})_{p\in\Z}$ of $C^{\ast}(G; A)$ as follows: 
\[
\begin{cases}
C_{p}^{q} = C^{q}(G; A), & \mbox{if $p \leq 0$}, \\
C_{p}^{q} = 0, & \mbox{if $q < p$} 
\end{cases}
\]
and for $0 < p \leq q$, we set $C_{p}^{q}$ the group of $q$-cochains $c \in C^{q}(G; A)$ such that $f(g_{1}, \ldots, g_{p}) = 0$ whenever $(n-j+1)$ of the arguments belong to $N$. 
We can check $\delta(C_{p}^{\ast}) \subset C_{p}^{\ast}$ easily. 

This filtration defines a spectral sequence $E_{r}^{p, q}$, called the {\it Lyndon-Hochschild-Serre spectral sequence} for the group extension: 
\[
1 \to N \to G \to G/N \to 1. 
\] 

\begin{dfn}[Hochschild-Serre \cite{Hochschild-Serre53}]
Let $G$ be a group, $N$ be a normal subgroup of $G$ and $A$ be a $G$-module. 
We set $Z_{r}^{p, q} = \{ c \in C_{p}^{p+q} \mid \delta c \in C_{p+r}^{p+q+1}\}$. 
The Lyndon-Hochschild-Serre (LHS) spectral sequence is a spectral sequence defined as
\[
E_{r}^{p, q} = Z_{r}^{p, q}/(Z_{r-1}^{p-1, q+1} + \delta Z_{r-1}^{p+1-r, q+r-2})
\]
and differential operators $d_{r} : E_{r}^{p, q} \to E_{r}^{p+r, q-r+1}$ are induced by $\delta$. 
\end{dfn}

\begin{prp}[Hochschild-Serre \cite{Hochschild-Serre53}]
The LHS spectral sequence $E_{r}^{p, q}$ satisfies the follows. 
\begin{enumerate}
\item 
There exists an isomorphism 
\[
E_{1}^{p, q} \cong C^{p}(G/N; H^{q}(N; A)). 
\]
and the differential $d_{1}$ corresponds to the group coboundary map $\delta_{G/N} : C^{p}(G/N; H^{q}(N; A)) \to C^{p+1}(G/N; H^{q}(N; A))$. 
\item 
There exists an isomorphism
\[
E_{2}^{p, q} \cong H^{p}(G/N; H^{q}(N; A)). 
\]
\item 
This spectral sequence $E_{r}^{p, q}$ converges to $H^{p+q}(G; A)$, that is, there is an isomorphism
\[
E_{\infty}^{p, q} \cong \Rm{Im}(H^{p+q}(C_{p}) \to H^{p+q}(G; A))/\Rm{Im}(H^{p+q}(C_{p+1}) \to H^{p+q}(G; A)). 
\]
\end{enumerate}
\end{prp}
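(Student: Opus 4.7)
The plan is to compute $E_0$, $E_1$, $E_2$ for the given filtration directly, and then deduce (3) from the general convergence theorem for a bounded-below exhaustive filtration of a cochain complex. First, I would fix a normalized set-theoretic section $s : G/N \to G$ of the projection $p$, so that every $g \in G$ factors uniquely as $g = n \cdot s(\bar g)$ with $\bar g = p(g)$ and $n \in N$. Using this section I would identify the subquotient $C_{p}^{p+q}/C_{p+1}^{p+q}$: a class is represented by a normalized cochain that is determined by its values on tuples of the form $(s(\bar g_{1}), \ldots, s(\bar g_{p}), n_{1}, \ldots, n_{q})$ with $\bar g_{i} \in G/N$ and $n_{j} \in N$. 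This gives a natural isomorphism
\[
E_{0}^{p, q} \;=\; C_{p}^{p+q}/C_{p+1}^{p+q} \;\cong\; C^{p}(G/N;\, C^{q}(N; A)),
\]
where $C^{q}(N; A)$ carries the $G/N$-module structure induced by the conjugation action of $G$ on $N$ together with the $G$-action on $A$ (the inner automorphisms coming from $N$ act trivially on $H^{q}(N;A)$, so the action passes to $G/N$ after taking $N$-cohomology).

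Next, expanding the coboundary $\delta c$ on a tuple $(s(\bar g_{1}), \ldots, s(\bar g_{p}), n_{1}, \ldots, n_{q+1})$ and keeping only the terms that do not raise the filtration index shows that $d_{0}$ is the group coboundary of $C^{q}(N; A)$ applied pointwise in $(\bar g_{1}, \ldots, \bar g_{p})$; any term merging an outer $s(\bar g_{i})$ with an inner $n_{j}$ lands in $C_{p+1}^{p+q+1}$ and dies in the quotient. Taking cohomology in the $N$-direction yields
\[
E_{1}^{p, q} \;\cong\; C^{p}(G/N;\, H^{q}(N; A)),
\]
and a similar but more delicate calculation of $d_{1}$, now tracking exactly the terms of $\delta$ that cross between outer and inner arguments modulo $N$-coboundaries, identifies $d_{1}$ with the group coboundary $\delta_{G/N}$ of $C^{p}(G/N; H^{q}(N; A))$. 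Passing to cohomology in the $G/N$-direction then gives (2): $E_{2}^{p, q} \cong H^{p}(G/N; H^{q}(N; A))$. For (3), the filtration $(C_{p}^{\ast})_{p}$ is bounded below (since $C_{p}^{n} = 0$ for $p > n$) and exhaustive (since $C_{p}^{n} = C^{n}(G; A)$ for $p \leq 0$), so the standard convergence theorem for the spectral sequence of a filtered cochain complex produces the stated identification of $E_{\infty}^{p, q}$.

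The step I expect to be the main obstacle is verifying that the identification of $E_{0}$ and the differential $d_{0}$ are intrinsic, i.e.\ independent of the chosen section $s$: different sections produce different normalized representatives, and one must check that the induced $G/N$-action on $H^{q}(N; A)$ and the differentials on the $E_{r}$ pages do not depend on this choice. I would handle this either by constructing an explicit cochain homotopy between the identifications associated to two sections, or by replacing the naive section approach with the bar resolution of $G/N$, for which the action and differentials are manifestly canonical. Once this technical point is settled, every remaining verification reduces to expanding the coboundary formula in the normalized group cochain complex and tracking which terms lie in which filtration level.
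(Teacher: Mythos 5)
There is a genuine gap at the very first step: the claimed identification $E_{0}^{p,q}=C_{p}^{p+q}/C_{p+1}^{p+q}\cong C^{p}(G/N;C^{q}(N;A))$ by restricting to tuples of the form $(s(\bar g_{1}),\ldots,s(\bar g_{p}),n_{1},\ldots,n_{q})$ is false for the filtration the paper actually uses. In the Hochschild--Serre filtration, $C_{p}^{n}$ consists of cochains vanishing whenever \emph{any} $n-p+1$ of the arguments lie in $N$, so a class in $C_{p}^{p+q}/C_{p+1}^{p+q}$ is determined by the values of a representative on \emph{all} tuples having at least $q$ arguments in $N$, with those arguments in arbitrary positions. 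Already for $p=q=1$: two cochains in $C_{1}^{2}$ can agree on all pairs $(s(\bar g),n)$ yet differ on $(n,g)$ or on $(g,n)$ with $g\notin s(G/N)$, so they represent distinct classes modulo $C_{2}^{2}$; the restriction map you propose is not injective on the associated graded. Consequently $E_{0}^{p,q}$ is strictly larger than $C^{p}(G/N;C^{q}(N;A))$ for $p\geq 1$, and your derivation of $E_{1}$ by ``taking cohomology in the $N$-direction'' of this (incorrect) $E_{0}$ does not get off the ground. The collapse onto $C^{p}(G/N;H^{q}(N;A))$ genuinely happens only at the $E_{1}$ page, and proving it is the real content of the proposition: one must show that the ``extra'' part of $E_{0}$ (values on tuples with the $N$-arguments interleaved among the others) is killed by the $d_{0}$-cohomology. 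This is exactly what the paper's sketch (following Hochschild--Serre) does with the $(p,q)$-shuffle antisymmetrization $c_{p}=\sum_{\sigma}\sgn(\sigma)c_{\sigma}$, where the shuffled arguments are conjugated back into $N$, and the resulting map $\varphi:C^{p+q}(G;A)\to C^{p}(G/N;C^{q}(N;A))$, which induces an isomorphism on $E_{1}$ (via a chain homotopy) but not on $E_{0}$. A minor symptom of the same confusion: you endow $C^{q}(N;A)$ with a $G/N$-module structure by appealing to the triviality of inner automorphisms on $H^{q}(N;A)$, but that argument only applies after passing to cohomology.

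The secondary concern you flag---independence of the chosen section $s$---is not the main obstacle, and parts of your outline are fine: the filtration is bounded ($C_{p}^{n}=C^{n}(G;A)$ for $p\leq 0$ and $C_{p}^{n}=0$ for $p>n$), so item (3) does follow from the standard convergence theorem exactly as you say. To repair (1) and (2) you would need either to carry out the shuffle-and-homotopy argument of Hochschild--Serre, or to replace the paper's filtration by one for which the associated graded really is the double complex $C^{p}(G/N;C^{q}(N;A))$ (e.g.\ via the bar resolutions) and then show the two filtrations yield the same spectral sequence from $E_{1}$ on.
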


\begin{proof}[Sketch of proof]
Suppose $c \in C^{p+q}(G; A)$. 

Let $\sigma$ be a $(p, q)$-shuffle, that is, $\sigma$ is a permutation of $(1, \ldots, p+q)$ such that $\sigma(1) < \cdots \sigma(p)$ and $\sigma(p+1) < \cdots < \sigma(p+q)$. 
We define $c_{\sigma} \in C^{p+q}(G; A)$ by
\[
c_{\sigma}(\alpha_{1}, \ldots, \alpha_{q}, \beta_{1}, \ldots, \beta_{p}) = c(g_{1}, \ldots, g_{p+q}), 
\]
where
\[
g_{\sigma(i)} = 
\begin{cases}
\beta_{i} & \mbox{if $1 \leq i \leq p$}, \\
\beta_{\sigma(i) + p - i}^{-1} \cdots \beta_{1}^{-1} \alpha_{i-p} \beta_{1} \cdots \beta_{\sigma(i) + p - i}, & \mbox{if $p+1 \leq i \leq p+q$}
\end{cases}\]
and 
\[
c_{p} = \sum_{\sigma} \sgn(\sigma) c_{\sigma}. 
\]

Finally, we define a homomorphism $\varphi : C^{p+q}(G; A) \to C^{p}(G/N; C^{q}(N; A))$ such that 
\[
\varphi(c)(\beta_{1}, \ldots, \beta_{p})(\alpha_{1}, \ldots, \alpha_{q}) = c_{p}(\alpha_{1}, \ldots. \alpha_{q}, \beta_{1}, \ldots, \beta_{p})
\]
for $\beta_{1}, \ldots, \beta_{p} \in G/N$ and $\alpha_{1}, \ldots, \alpha_{q} \in N$. 
This homomorphism $\varphi$ induces isomorphisms $E_{1}^{p, q} \to C^{p}(G/N; H^{q}(N; A))$ and $E_{2}^{p, q} \to H^{p}(G/N; H^{q}(N; A))$. 
\end{proof}

Denote by $A^{N}$ the subgroup of $A$ consisting of all of $G$-invariant elements. 
It is easy to check that $H^{0}(N; A)$ is isomorphic to $A^{N}$. 
For a group $p$-cocycle $c \in C^{p}(G/N; A^{N})$, we define a new cocycle $\overline{c} \in C^{p}(G/N; A)$ by composing with natural projections $G \to G/N$ and a natural inclusion $A^{N} \hookrightarrow A$; 
\[
\overline{c} : G^{p} \to (G/N)^{p} \xrightarrow{c} A^{N} \hookrightarrow A. 
\]
This correspondence define a homomorphism $\Rm{inf} : H^{p}(G/N; A^{N}) \to H^{p}(G; A)$, which is called the {\it inflation homomorphism}. 

On the other hand, an inclusion map $N \hookrightarrow G$ induces a homomorphism $\Rm{res} : H^{q}(G; A) \to H^{q}(N; A)$. 
It is easy to check that $\Rm{res}$ factors through the $G$-invariant part of $H^{q}(N, A)$. 
This homomorphism $\Rm{res} : H^{q}(G; A) \to H^{0}(G/N; H^{q}(N; A))$ is called the {\it restriction homomorphism}. 

\begin{prp}[Hochschild-Serre \cite{Hochschild-Serre53}]
Let $1 \to N \to G \to G/N \to 1$ be a group extension and $A$ be a $G$-module. 
Then there exists a group exact sequence
\begin{align*}
0 \to H^{1}(G/A; A^{N}) \xrightarrow{\Rm{inf}} H^{1}(G; A) \xrightarrow{\Rm{res}} & H^{0}(G/N; H^{1}(N; A))\\
& \xrightarrow{d_{2}} H^{2}(G/N; A^{N}) \xrightarrow{\Rm{inf}} H^{2}(G; A). 
\end{align*}
\qed
\end{prp}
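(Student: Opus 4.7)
The plan is to extract the five-term sequence as a low-degree consequence of the LHS spectral sequence, using its convergence together with the identification of edge morphisms with inflation and restriction.

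First, I would record the $E_{2}$-entries relevant to total degrees $1$ and $2$. By the preceding proposition, $E_{2}^{0,1} \cong H^{0}(G/N; H^{1}(N; A))$, $E_{2}^{1,0} \cong H^{1}(G/N; A^{N})$, and $E_{2}^{2,0} \cong H^{2}(G/N; A^{N})$. By first-quadrant reasons, the only differential that can affect these entries is the transgression $d_{2} : E_{2}^{0,1} \to E_{2}^{2,0}$: the map $d_{2} : E_{2}^{1,0} \to E_{2}^{3,-1}$ has zero target, nothing can map into $E_{r}^{1,0}$ since $E_{r}^{1-r, r-1} = 0$ for $r \geq 2$, and differentials into $E_{r}^{2,0}$ come only from $E_{r}^{2-r, r-1}$, which is nonzero only when $r = 2$. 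Hence $E_{\infty}^{1,0} = E_{2}^{1,0}$, $E_{\infty}^{0,1} = \ker d_{2}$, and $E_{\infty}^{2,0} = E_{2}^{2,0}/\im d_{2}$.

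Convergence then yields the short exact sequence
\[
0 \to E_{\infty}^{1,0} \to H^{1}(G; A) \to E_{\infty}^{0,1} \to 0
\]
for the filtration on $H^{1}(G; A)$, and an injection $E_{\infty}^{2,0} \hookrightarrow H^{2}(G; A)$ as the deepest piece of the filtration on $H^{2}(G; A)$ (note $F^{3} = 0$). Splicing these two exact pieces through $d_{2}$ already produces a sequence of the desired shape; what remains is to match the edge maps with $\Rm{inf}$ and $\Rm{res}$.

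I would verify this identification by tracing the cochain-level homomorphism $\varphi$ from the sketch of the previous proposition. A cochain pulled back from $G/N$ via $G \to G/N$ vanishes on any tuple containing an $N$-entry, so it lies in the deepest filtration piece $F^{p}$; this realizes inflation as the edge morphism $E_{2}^{p,0} \twoheadrightarrow E_{\infty}^{p,0} \hookrightarrow H^{p}(G; A)$ for $p = 1, 2$. Dually, restricting a cocycle on $G$ to $N$-tuples is the edge map $H^{1}(G; A) \twoheadrightarrow E_{\infty}^{0,1} \hookrightarrow E_{2}^{0,1}$, which is precisely $\Rm{res}$. The main obstacle is this cochain-level bookkeeping, especially verifying that $d_{2}$ computed from $Z_{r}^{p,q} = \{c \in C_{p}^{p+q} \mid \delta c \in C_{p+r}^{p+q+1}\}$ really is the conventional transgression, so that $\ker d_{2} = \im \Rm{res}$ and $\im d_{2} = \ker \Rm{inf}|_{H^{2}}$. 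Once this is granted, the four exactness assertions—injectivity of $\Rm{inf}$ on $H^{1}$, $\ker \Rm{res} = \im \Rm{inf}$, $\im \Rm{res} = \ker d_{2}$, and $\im d_{2} = \ker \Rm{inf}|_{H^{2}}$—are all formal consequences of the $E_{\infty}$-description above.
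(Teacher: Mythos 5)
Your derivation is correct and is exactly the standard low-degree analysis of the LHS spectral sequence that the paper itself relies on: the paper offers no proof of this proposition (it cites Hochschild--Serre and remarks that the five-term sequence is a general consequence of a convergent first-quadrant spectral sequence), and your computation of $E_{\infty}^{1,0}$, $E_{\infty}^{0,1}$, $E_{\infty}^{2,0}$ together with the splicing through $d_{2}$ and the identification of the edge maps with $\Rm{inf}$ and $\Rm{res}$ is the intended argument. The only caveat is the one you already flag yourself, namely the cochain-level verification that the edge morphisms of the filtration $C_{p}^{\ast}$ are inflation and restriction and that $d_{2}$ on $E_{2}^{0,1}$ is the transgression; this is routine and is carried out in Hochschild--Serre.
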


This exact sequence is called the {\it five-term exact sequence} or the {\it inflation-restriction exact sequence} for a group extension $1 \to N \to G \to G/N \to 1$. 

In general, given the spectral sequence converging to some cohomology group, it is well known that there is an exact sequence called the five-term exact sequence. 

Suppose that $N$ acts on $A$ trivially. 
According to Hochschild-Serre \cite{Hochschild-Serre53}, the transgression map on the LHS spectral sequence is given by a cup product. 

We define a cup product $\cup : \Hom(N, A) \times N/N' \to A$ by $f \cup n = f(n) \in A$. 
This cup product induces a cup product $\cup : \Hom(N, A)^{G} \times H^{2}(G/N; N/N') \to H^{2}(G/N; A)$, where $\Hom(N, A)^{G}$ denotes a group of $G$-invariant homomorphisms on $N$ to $A$. 

\begin{thm}[Hochschild-Serre \cite{Hochschild-Serre53}]
Let $G$ be a group, $N$ be a normal subgroup and $A$ be a $G$-module which $N$ acts on trivially. 
Set $N'$ be the commutator subgroup of $N$. 
For $f \in H^{1}(N; A)^{G}$, the transgression image of $f$ can be written by $d_{2}f = -f \cup e(G/N') \in H^{2}(G/N; A)$ explicitly, where $e(G/N') \in H^{2}(G/N; N/N')$ is the extension class of the group extension $0 \to N/N' \to G/N' \to G/N \to 1$. 
\end{thm}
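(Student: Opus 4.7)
The plan is to piggyback on Theorem \ref{thm:1}, since a connection cochain packages exactly the data needed to realize the transgression explicitly.

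Starting from $f \in \Hom(N, A)^G$, I would apply Theorem \ref{thm:1} to obtain a normalized connection cochain $\tau : G \to A$ over $f$. The defining identity $\tau(n g) = f(n) + \tau(g)$ together with $\tau(1) = 0$ forces $\tau|_{N} = f$, so $\tau$ is a concrete lift of the $1$-cocycle $f$ from $N$ to $G$. A one-line computation, using that $f$ is a homomorphism and that $N$ acts trivially on $A$, gives $\delta\tau(n, m) = 0$ for $n, m \in N$, so $\tau$ lies in the filtration piece $Z_{2}^{0, 1}$. The first task is then to check that the class of $\tau$ in $E_{2}^{0, 1}$ is precisely the one that matches $f$ under the isomorphism $E_{2}^{0, 1} \cong \Hom(N, A)^G$: this amounts to tracing $\tau$ through the shuffle map $\varphi$ appearing in the sketched proof of the LHS proposition and observing that $\varphi(\tau)$ is the constant cochain on $G/N$ with value $f$.

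Next, I would compute the transgression by applying $\delta$ to this representative $\tau$: by the definition of $d_{2}$ on the spectral sequence, $d_{2}f$ is represented in $E_{2}^{2, 0} = H^{2}(G/N; A)$ by the class of $\delta\tau$ pushed to the $E_{2}$-page. The proof of Theorem \ref{thm:1} already shows that $\delta\tau(g, h)$ depends only on $p(g), p(h) \in G/N$, so $\delta\tau$ descends to a $2$-cocycle on $G/N$ with values in $A$, and the formula $\sigma(p(g), p(h)) = -\delta\tau(g, h)$ together with part (4) of Theorem \ref{thm:1} identifies this descended cocycle $-\sigma$ with $-f_{\ast} e(G/N')$.

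To close the argument, I would match the push-forward with the cup product from the paragraph preceding the theorem. The pairing $\cup : \Hom(N, A) \times N/N' \to A$ is defined by $f \cup n = f(n)$, so on cochains $(f \cup c)(x, y) = f(c(x, y))$, which is literally the push-forward $f_{\ast}$ applied to any representative $c$ of $e(G/N')$. Assembling everything yields
\[
d_{2}f = [\delta\tau] = -[\sigma] = -f_{\ast} e(G/N') = -f \cup e(G/N'),
\]
as claimed.

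The main obstacle I expect is precisely the first technical step: the book-keeping between the filtration-theoretic description of $E_{2}^{0, 1}$ and its identification with $\Hom(N, A)^{G}$ via $\varphi$ is what pins down the sign in the final answer, and care is needed to ensure that the normalization of $\tau$ is compatible with the edge isomorphism so that the sign in $d_{2}f = -f \cup e(G/N')$ emerges cleanly without an extraneous factor. Once that alignment is in place, the theorem is essentially a translation of Theorem \ref{thm:1} into spectral-sequence language.
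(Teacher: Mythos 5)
The paper gives no proof of this statement at all --- it is quoted from Hochschild--Serre, and Section 5 then runs the implication in the \emph{opposite} direction, using the cited formula $d_{2}f = -f \cup e(G/N')$ to give a second proof of Theorem \ref{thm:1}. Your proposal inverts that logic: since Theorem \ref{thm:1} is proved independently by direct cocycle computation, deducing the transgression formula from it is sound and not circular, and it effectively makes the Section 5 discussion self-contained. Your indexing is also the correct one ($f$ lives in $E_{2}^{0,1}$ and $d_{2}f$ in $E_{2}^{2,0}$; the paper's $E_{2}^{1,0}$ and $E_{2}^{0,2}$ are slips). Two points need tightening. First, $\delta\tau(n,m)=0$ for $n,m\in N$ only shows that $\tau|_{N}$ is a cocycle; membership in $Z_{2}^{0,1}$ requires $\delta\tau\in C_{2}^{2}$, i.e.\ that $\delta\tau$ vanishes whenever \emph{one} argument lies in $N$. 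This does hold --- the proof of Theorem \ref{thm:1} shows $\delta\tau(ng,h)=\delta\tau(g,h)$ and $\delta\tau(g,nh)=\delta\tau(g,h)$, which together with normalization gives $\delta\tau(n,h)=\delta\tau(1,h)=0$ and $\delta\tau(g,n)=\delta\tau(g,1)=0$; note that the second identity uses $G$-invariance of $f$ in the precise form $g\cdot f(n)=f(gng^{-1})$. Second, the edge identifications you defer to are genuinely needed but easy: for $p=0$ the shuffle map $\varphi$ is literally restriction to $N$, so $\varphi(\tau)=\tau|_{N}=f$, and for $(p,q)=(2,0)$ there is a unique shuffle and $\varphi$ is the evident descent to $G/N$. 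With those recorded, the chain $d_{2}f=[\delta\tau]=-e(G_{A})=-f_{\ast}e(G/N')=-f\cup e(G/N')$ closes as you describe.
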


In the term of the LHS spectral sequence, a connection cochain $\tau : G \to A$ over a $G$-invariant homomorphism $f : N \to A$ is a representation cocycle of the class of $E_{2}^{1, 0}$ which corresponds to $f \in \Hom(N, A)^{G} = H^{0}(G/N; H^{1}(N; A))$. 
Thus $\delta\tau$ represents $d_{2}f$ in $E_{2}^{0, 2} \cong H^{2}(G/N; A)$. 
By definition, the cup product $f \cup e(G/N')$ coincides with the push-forward of $e(G/N')$ by $f$, so that, $f \cup e(G/N') = e(G_{A})$.
Summarizing the above, we conclude the follow: 
\[
e(G_{A}) = f \cup e(G/N') = -d_{2}f = [-\delta\tau]. 
\]
This is an another proof of theorem \ref{thm:1}. 

\begin{cor}
Let $D$ be a closed unit disk in $\R^{2}$ with a standard symplectic form $\omega$ and $G = \Rm{Symp}(D, \omega)$ denote the group of symplectomorphisms on $D$. 
We set $G_{\Rm{rel}} = \{ g \in G \mid g|_{\partial D} = \id\}$ and $G_{\partial} = \Rm{Diff}^{+}(\partial D)$. 
There are a group extension $1 \to G_{\Rm{rel}} \to G \to G_{\partial} \to 1$ and the Calabi invariant $\Rm{Cal} : G_{\Rm{rel}} \to \R$. 
Let $H$ be the universal covering group of $G_{\partial}$ thus there is a central extension $0 \to \Z \to H \to G_{\partial} \to 1$. 

Then we obtain $\pi^{2}e(H_{\R}) = -d_{2}\Rm{Cal}$. 
\end{cor}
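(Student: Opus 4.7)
The plan is to specialize the identity $e(G_A) = -d_2 f$ established in the discussion just before the corollary to the Calabi setting, and then combine the result with Moriyoshi's comparison theorem from Section 3.

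First I would take $N = G_{\mathrm{rel}}$, $A = \R$, and $f = \mathrm{Cal}$. The hypotheses of Theorem \ref{thm:1} are met: the $G$-action on $\R$ is trivial (so a fortiori $N$ acts trivially on $A$), and $\mathrm{Cal} : G_{\mathrm{rel}} \to \R$ is a $G$-invariant homomorphism by the proposition of Section 3. Under the identification $\R \cong G_{\mathrm{rel}}/K$ (with $K = \mathrm{Ker}\,\mathrm{Cal}$), the central extension $G_A$ obtained from the equivalence relation $(ng,a) \sim (g, f(n)+a)$ is precisely the Calabi extension $0 \to \R \to G/K \to G_{\partial} \to 1$. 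Theorem \ref{thm:1}, equivalently its spectral-sequence reformulation at the end of Section 5 reading $e(G_A) = f\cup e(G/N') = -d_2 f$, thus yields
\[
e(G/K) = -d_2 \mathrm{Cal} \in H^{2}(G_{\partial}; \R).
\]

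Next I would invoke Moriyoshi's theorem at the end of Section 3, which relates the Calabi extension class $e(G/K)$ to the universal Euler class $e(H_{\R})$ pulled back along $G \to G_{\partial}$. Tracking the normalization used in Example \ref{exm:univ_cov} — where $e(H_{\R})$ is represented by $\chi$ with the factor $\frac{1}{4\pi^{2}}$, chosen so that $e(H_{\R})$ takes integer values on the center $\Z \subset H$ — against the normalization built into $\mathrm{Cal}$ on the unit disk (which picks up the area factor coming from $\int_{D}\omega = \pi$), the comparison takes the form $e(G/K) = \pi^{2} e(H_{\R})$. Substituting into the identity above gives the desired conclusion
\[
\pi^{2} e(H_{\R}) = -d_{2}\mathrm{Cal}.
\]

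The only non-routine step is the bookkeeping of the constant $\pi^{2}$. Theorem \ref{thm:1} is normalization-free, so the factor comes entirely from reconciling the conventions of the two sides of Moriyoshi's comparison: the integral formula for $e(H_{\R})$ over $[0, 2\pi]$ versus the Calabi integral over the unit disk. A clean way to pin it down is to evaluate both cocycles on a single test element, e.g.\ the generator of the center $\Z \subset H$ corresponding to the rotation by $2\pi$, paired with the Calabi invariant of a symplectomorphism of $D$ realizing the same lift on the boundary; this is where I expect the main (but still routine) computational effort to lie.
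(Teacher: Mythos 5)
Your proposal follows the same route the paper intends: the corollary is given no separate proof and is meant as a direct specialization of the displayed identity $e(G_{A}) = f \cup e(G/N') = -d_{2}f = [-\delta\tau]$ to $N = G_{\Rm{rel}}$, $A = \R$, $f = \Rm{Cal}$, combined with Moriyoshi's comparison of $e(G/K)$ with $e(H_{\R})$ from Section 3. The one step you defer --- checking that reconciling the $\frac{1}{4\pi^{2}}$ normalization of the Euler cocycle with the Calabi integral over the unit disk produces exactly the factor $\pi^{2}$ --- is also the one step the paper itself does not supply: the theorem quoted at the end of Section 3 asserts $e(G/K) = e(H_{\R})$ with no constant, which taken literally would yield $e(H_{\R}) = -d_{2}\Rm{Cal}$ rather than $\pi^{2}e(H_{\R}) = -d_{2}\Rm{Cal}$, so your plan of evaluating both cocycles on the generator of the center $\Z \subset H$ is precisely the computation that would have to be carried out to justify the stated constant.
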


In other words, in the five-term exact sequence for $1 \to G_{\Rm{rel}} \to G \to G_{\partial} \to 1$, $\Rm{Cal}$ is in $H^{1}(G_{\Rm{rel}}; \R)$ and $\pi^{2}e(H_{\R})$ is an image of $\Rm{Cal}$ in $H^{2}(G_{\partial}; \R)$: 
\begin{align*}
0 \to H^{1}(G_{\partial}; \R) \xrightarrow{\Rm{inf}} H^{1}(G; \R) \xrightarrow{\Rm{res}} & \Hom(G_{\Rm{rel}}, \R)^{G}\\
& \xrightarrow{d_{2}} H^{2}(G_{\partial}; \R) \xrightarrow{\Rm{inf}} H^{2}(G; \R). 
\end{align*}

\bibliography{reference}

\end{document}